\title{\LARGE \bf
Optimal Sensor Configurations \\for Rectangular Target Dectection
}
\author{Fran\c cois-Alex Bourque and Bao U. Nguyen
\thanks{F-.A. Bourque and B. U. Nguyen are scientists with Defence Research and Development Canada 
Centre for Operational Research and Analysis, National Defence Headquarters, 101 Colonel By drive, Ottawa, Canada.}
\thanks{Email enquiries should be sent to alex.bourque@drdc-rddc.gc.ca.}
}
\begin{document}

\newtheorem{theorem}{Theorem}[section]
\newtheorem{lemma}[theorem]{Lemma}
\newtheorem{proposition}[theorem]{Proposition}
\newtheorem{corollary}[theorem]{Corollary}
\newtheorem{remark}[theorem]{Remark}

\maketitle
\thispagestyle{empty}
\pagestyle{empty}

\begin{abstract}
Optimal search strategies where targets are observed at several different angles are found.  
Targets are assumed to exhibit rectangular symmetry and have a uniformly-distributed orientation.  
By rectangular symmetry, it is meant 
that one side of a target is the mirror image of its opposite side.   Finding an optimal solution is generally a 
hard problem.  Fortunately, symmetry principles allow analytical and intuitive solutions to be found. One such 
optimal search strategy consists of choosing $n$ angles evenly separated on the half-circle and leads to a lower bound 
of the probability of not detecting targets. As no prior knowledge of the target orientation is required, 
such search strategies are also robust, a desirable feature in search and detection missions.
\end{abstract}

\section{INTRODUCTION}
In mine hunting operations it is known that the detection performance improves when a target is observed 
many times at different aspect angles~\cite{zer2001,ngu2005,ngu2005b,ngu2008,faw2008,ngu2008b}. Similarly, classification algorithms
~\cite{run1999,rob2005,shi2005} and 
sensor-arrays deployed for target localization and tracking ~\cite{mar2005,ash2008,bis2009,bis2010} benefit from multi-aspect observations. This 
fact is, however, often overlooked.  For example, the formula for the probability of detecting a target in a random 
search derived by Koopman is widely used yet it assumes no angular dependence
~\cite{koo1980}.

In this paper, search strategies that minimize the overall probability of not detecting a target 
observed at several different angles are identified. Finding an optimal angular configuration is a priori intractable as it is multi-dimensional in 
the sense that each observation is independent of one another and hence each observation angle must be considered as 
a separate dimension.  What is more, the explicit expression for the overall probability of detection can be hopelessly 
complicated even when the probability of detection for a single observation is simple and the number of observations 
is small.

The novelty of our approach lies in the fact that this normally intractable problem is solved using an elegant 
symmetry argument. Specifically, targets are assumed to exhibit rectangular symmetry. 
That is, the left-hand side of a target is the mirror image of its right-hand side, and its rear end is the mirror 
image of its front end.  Many targets can be approximated with this class of symmetry including canoes, ships, 
submarines, mines and human bodies.  

Optimal angles are then shown to be evenly distributed on multiples of the half-circle as in Ref.~\cite{bis2010}. However, this constitutes 
a departure from the current literature on sensor geometry ~\cite{mar2005,ash2008,bis2009,bis2010} as our result is derived for finite-extent targets
 rather than for point targets.  The simplicity of the solution implies that no complicated calculations are required 
 prior to a search as long as the target has the assumed approximate symmetry.  This fact should improve the task of 
 planning the path of mobile sensors, such as unmanned vehicles, to search for fixed targets, as well as of deploying 
 a fixed sensor array to monitor traffic through choke points.  

Assumptions and the minimization problem are stated in Section \ref{sec_ii}, while Section \ref{sec_iii} presents 
a set of search strategies that minimizes the probability of no detection. Section \ref{sec_iv} provides a lower bound of not detecting 
targets achievable with these strategies, which is illustrated with a specific
example in Section \ref{sec_v}.  Conclusions including future work are discussed in
Section \ref{sec_vi}. Supplementary lemmata used in Sections \ref{sec_iii} and \ref{sec_iv} are found in the Appendix.

\section{\label{sec_ii}PROBLEM STATEMENT}
\begin{figure}
\centering
\includegraphics[width=3.5in]{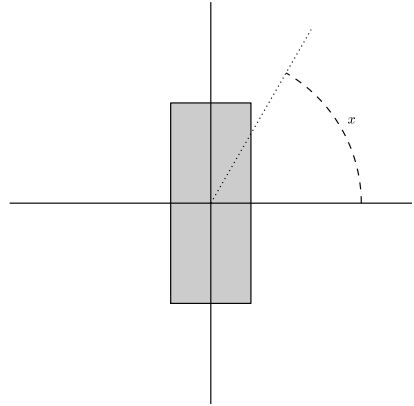}
\caption{Rectangular target observed at angle $x$.}
\label{target}
\end{figure}

The dependence of detection process on angle occurs often in search and detection operations.  In general, 
the effectiveness of such an operation also depends on the distance between the sensor and the target.  
However, here, the probability of detection is assumed constant as a function of range and, hence, the focus is only on 
the angular dependence.  For more details on the range dependence, refer to Ref. [6].\footnote{Note that the probability of detection as a function of range is primarily a characteristic of the sensor, 
while the probability of detection as a function of angle is primarily a characteristic of the target.}  

As shown in Fig.~\ref{target}, the problem is modeled on a two-dimensional plan and the observation angle, $x$, is defined 
as the counter-clockwise angle measured in radian between the sensor beam and the short axis of a rectangular (positive horizontal axis) target. 
An observation angle of zero degree corresponds to the observation of the long side of the target, 
while an observation angle of $\pi/2$  degrees corresponds to the observation of the short side of the target. 
Targets considered will have approximate rectangular symmetries as shown in Fig.~\ref{symmetries}.  
That is, they possess a reflection axis through their short axis (left-right mirror symmetry) 
and a point reflection through their center.\footnote{Note that the composition of a reflection through the short 
axis followed by a reflection through the center of the target is equivalent to a reflection through the long axis 
of the target (forward/backward mirror symmetry).}  
\begin{figure}
\centering
\includegraphics[width=3.50in]{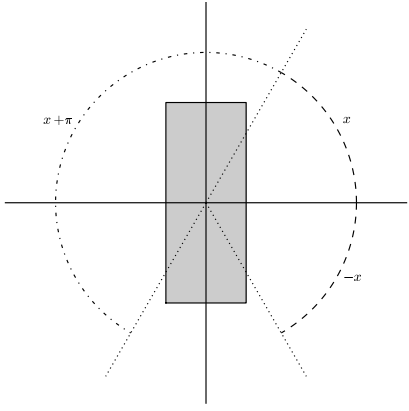}
\caption{Symmetries of the target: Reflection through the short axis of the target (dashed line) and 
reflection through the center of the target (dot-dashed line).}
\label{symmetries}
\end{figure}

In what follows, the probability of no detection rather than the probability of detection is considered; one being the 
complement of the other. Define the single probability of no detection as the probability of not detecting the target 
at angle $x$ and denote this single-value real function as $g(x)$. Note that the single probability of no detection is even due to the reflection symmetry 
through the short axis of the target and periodic due to the reflection through the center of the target. Specifically, 
\setlength{\arraycolsep}{0.0em}
\begin{eqnarray*}
g(x) &{}={}& g(-x)\mbox{,} \\ 
g(x) &{}={}& g(x + \pi)\mbox{.} 
\end{eqnarray*}	
\setlength{\arraycolsep}{5pt}
\indent Next, define the multiple probability of no detection as the probability of not detecting a target after $n$ 
observations. Let $x$ be the orientation of the target.  Let $u_i$ be the $i$-th angle at which the target is 
observed relative to $x$ and  $\vec{\mu} =(\mu_0,\dots,\mu_{n-1})$ be the vector of the $n$ relative observation 
angles. Assume the multiple observation detection process is a Bernoulli process, i.e., all observations are 
independent. Then, the multi-observation probability of no detection is modeled as the product of single 
probabilities of no detection. In general, however, the exact value of $x$, i.e., the orientation of the 
target is unknown. To circumvent this problem, assume that the target's orientation is uniformly distributed and evaluate 
the average multiple probability of no detection by $G(\vec{\mu})$. Then,
\begin{equation}
\label{G}
G(\vec{\mu})=\frac{1}{\pi} \int_{-\frac{\pi}{2}}^{\frac{\pi}{2}} dx\ \prod_{ i =0}^{n-1}\ g(x+\mu_i)\mbox{.}	
\end{equation}

Therefore, the problem amounts to finding search strategies that minimize $G(\vec{\mu})$. For simplicity, the 
probability of no detection is taken to mean the average multiple probability of no detection in 
what follows.

\section{\label{sec_iii}A SET OF OPTIMAL SEARCH STRATEGIES}

In this section, a condition ensuring that all partial derivatives of the probability of no detection are equal to zero is derived.
From this condition, a set of optimal search strategies is then identified and the probability of no 
detection is recast into a form used in the subsequent section.

Let us first introduce some useful notation and definitions. Let$\ i\in \left\{0,\dots ,n-1\right\}$, 
${{\mathcal N}}_i=\left\{0,\dots ,n-1\right\}{\rm \ }\backslash \{i\}$ and$\ j\in $ ${{\mathcal N}}_i$. 
Define$\ {\partial }_i=\partial / {\partial }_i$. Denote ${\ \vec{\mu }}^*$ as an optimal 
point of $G\left(\vec{\mu }\right).$ Let $a$ be an integer and $b$ be a positive integer. Define 
the modulo operation as

\begin{displaymath} 
a\bmod b=a-\left\lfloor \frac{a}{b}\right\rfloor b. 
\end{displaymath}

\noindent Let $m$ be a non-negative integer and $\mu =\pi / n$. Define 
\begin{displaymath} 
\tilde{G}\left(m,n\right)=\frac{1}{\pi }\int^{\frac{\pi }{2}}_{-\frac{\pi }{2}}dx\ \prod_{ i =0 }^{n-1}\ g(x+mi\mu)\mbox{.}
\end{displaymath}
Then the following holds.
\begin{lemma}
\label{l_part_G}
The partial derivative can be written as
\setlength{\arraycolsep}{0.0em}
\begin{eqnarray*}
{\partial }_i G\left(\vec{\mu }\right)&{}={}&\frac{1}{2\pi}\int^{\frac{\pi}{2}}_{-\frac{\pi}{2}}dx\ g^\prime
\left(x\right)\bigg\{\prod_{j\in \ {{\mathcal N}}_i}{g\left(x+{\mu }_j-{\mu }_i\right)}  \\ 
&&{-}\:\prod_{j\in \ {{\mathcal N}}_i}{g\left(x-{\mu }_j+{\mu }_i\right)}\bigg\}.
\end{eqnarray*}
\setlength{\arraycolsep}{5pt}
\end{lemma}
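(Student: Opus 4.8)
The plan is to differentiate $G$ directly and then symmetrize the result using the parity of $g$. Since $\mu_i$ appears only in the single factor $g(x+\mu_i)$ of the product in \eqref{G}, differentiation under the integral sign gives at once
\begin{displaymath}
\partial_i G(\vec{\mu}) = \frac{1}{\pi}\int_{-\frac{\pi}{2}}^{\frac{\pi}{2}} dx\ g'(x+\mu_i)\prod_{j\in\mathcal{N}_i} g(x+\mu_j).
\end{displaymath}
My next step would be the change of variable $y = x+\mu_i$, which moves the shift off the differentiated factor and onto the remaining ones, producing the integrand $g'(y)\prod_{j\in\mathcal{N}_i} g(y+\mu_j-\mu_i)$ over the shifted window $[-\frac{\pi}{2}+\mu_i,\ \frac{\pi}{2}+\mu_i]$.

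The crucial observation is that both $g$ and $g'$ inherit the period $\pi$ (differentiating $g(x)=g(x+\pi)$ gives $g'(x)=g'(x+\pi)$), so the entire integrand is $\pi$-periodic. Because the shifted window again has length $\pi$, the integral over it equals the integral over $[-\frac{\pi}{2},\frac{\pi}{2}]$. Renaming $y$ back to $x$ yields the unsymmetrized form
\begin{displaymath}
\partial_i G(\vec{\mu}) = \frac{1}{\pi}\int_{-\frac{\pi}{2}}^{\frac{\pi}{2}} dx\ g'(x)\prod_{j\in\mathcal{N}_i} g(x+\mu_j-\mu_i).
\end{displaymath}

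To reach the stated difference, I would generate a second representation of this same integral by substituting $x\mapsto -x$. Differentiating $g(x)=g(-x)$ shows that $g'$ is odd while $g$ remains even, so the substitution sends $g'(x)\mapsto -g'(x)$ and $g(x+\mu_j-\mu_i)\mapsto g(x-\mu_j+\mu_i)$, turning the integral into $-\tfrac{1}{\pi}\int g'(x)\prod_{j\in\mathcal{N}_i} g(x-\mu_j+\mu_i)\,dx$. Averaging this representation with the previous one cancels the factor of two and delivers precisely the claimed bracketed difference carrying the prefactor $1/(2\pi)$.

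I expect the only delicate step to be the restoration of the integration limits after the change of variable: it rests essentially on the $\pi$-periodicity of $g$ (hence of $g'$ and of the full product), and it is exactly what lets the shift $\mu_i$ vanish from the limits while surviving only inside the arguments of the $g$-factors. Everything else—the differentiation under the integral sign and the parity bookkeeping in the final reflection—is routine once this periodicity argument is secured.
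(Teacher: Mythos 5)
Your proposal is correct and follows essentially the same route as the paper's proof: differentiate under the integral, shift the integration variable by $\mu_i$ and restore the limits via $\pi$-periodicity (the paper invokes its rotational-invariance Lemma \ref{a1} for exactly the step you argue inline), then produce a second representation by the reflection $x\mapsto -x$ using evenness of $g$ and oddness of $g^\prime$, and average the two. The only cosmetic difference is that you perform the reflection in a single substitution where the paper first applies evenness of $g$ inside the product and then substitutes $x\mapsto -x$; these compose to the identical argument.
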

\begin{proof}
Apply the partial derivative to the expression for$\ G\left(\vec{\mu }\right)$ given by (\ref{G}). Then
\begin{displaymath} 
{\partial }_iG(\vec{\mu })=\ \frac{1}{\pi }\int^{\frac{\pi }{2}}_{-\frac{\pi }{2}}{dx\ g^\prime\left(x+{\mu }_j\right)\prod_{j\in \ {{\mathcal N}}_i}{g(x+{\mu }_j)}.}
\end{displaymath} 
Let$\ x\to x-{\mu }_i$ and note that Lemma \ref{a1} applies and dictates that the integral is invariant under this change of 
variable. Thus,
\begin{equation}
\label{part_G_1}   
{\partial }_iG(\vec{\mu })=\ \frac{1}{\pi }\int^{\frac{\pi }{2}}_{-\frac{\pi }{2}}{dx\ g^\prime\left(x\right)\prod_{j\in \ {{\mathcal N}}_i}{g(x+{\mu }_j-{\mu }_i)}.}
\end{equation} 
Because $g(x)$ is even, 
\begin{displaymath}  
{\partial }_iG\left(\vec{\mu }\right)=\frac{1}{\pi }\int^{\frac{\pi }{2}}_{-\frac{\pi }{2}}{dx\ g^\prime\left(x\right)\prod_{j\in \ {{\mathcal N}}_i}{g(-x-{\mu }_j+{\mu }_i)}.}
\end{displaymath} 
Let$\ x\to -x$ and remark that $g^\prime(x)$ is odd. Then
\begin{equation}
\label{part_G_2}  
{\partial }_iG\left(\vec{\mu }\right)=-\frac{1}{\pi }\int^{\frac{\pi }{2}}_{-\frac{\pi }{2}}{dx\ g^\prime\left(x\right)\prod_{j\in \ {{\mathcal N}}_i}{g(x-{\mu }_j+{\mu }_i)}.}
\end{equation} 
And the result follows from the average of (\ref{part_G_1}) and (\ref{part_G_2}).                                                         
\end{proof}

A condition for optimizing $G\left(\vec{\mu }\right)$ is thus for the integrands of all partial 
derivatives to be equal to zero. The next Lemma identifies a set of search strategies for which this condition holds, i.e., optimizes the 
probably of no detection.

\begin{lemma}
\label{l_opt_strat}
Define $\left(m,n\right)$ to be the search strategy such that the separation between two consecutive observations is 
a constant and equal to $m\pi/n$ with $m$ a positive integer. Then the search strategy $\left(m,n\right)$ 
is an optimum of$\ G\left(\vec{\mu }\right)$ and defines a subset of all possible optimal search strategies. 
\end{lemma}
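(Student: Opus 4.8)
The plan is to verify that the evenly-spaced configuration makes the integrand of every partial derivative vanish identically, which is exactly the condition that the preceding discussion designates as optimality. Writing the strategy $(m,n)$ as $\mu_i = m i \mu$ with $\mu = \pi/n$, the relative angles appearing in Lemma~\ref{l_part_G} become $\mu_j - \mu_i = m(j-i)\mu$. First I would substitute these into the bracketed difference of products in Lemma~\ref{l_part_G} and show that the two products coincide for every $x$, so that $g^\prime(x)$ multiplies zero and ${\partial}_i G(\vec{\mu}) = 0$ for each $i$ simultaneously.

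The \emph{key step} is a periodicity-plus-counting argument. As $j$ runs over ${\mathcal N}_i = \{0,\dots,n-1\}\backslash\{i\}$, the index $k = j-i$ runs over $\{-i,\dots,-1,1,\dots,n-1-i\}$, which modulo $n$ is precisely the set of nonzero residues $\{1,\dots,n-1\}$, each occurring once. Since $g$ has period $\pi = n\mu$, the factor $g(x \pm m k \mu)$ depends on $k$ only through $k \bmod n$, so the plus-product reduces to $\prod_{r=1}^{n-1} g(x + mr\mu)$ and the minus-product to $\prod_{r=1}^{n-1} g(x - mr\mu)$. I would then observe that the involution $r \mapsto -r$ permutes the residues $\{1,\dots,n-1\}$ among themselves, and that reindexing the minus-product by this involution sends each factor $g(x - mr\mu)$ to $g(x - m(-r)\mu) = g(x + mr\mu)$. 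Hence the minus-product equals the plus-product, the bracket vanishes identically in $x$, and the stationarity condition holds for every $i$. Note that only the $\pi$-periodicity of $g$ is needed here (its evenness is already built into Lemma~\ref{l_part_G}), and, because the argument is phrased entirely in terms of multisets of residues, it does not require $\gcd(m,n)=1$.

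I expect the main obstacle to be bookkeeping rather than anything conceptual: one must keep the reduction modulo $\pi$ consistent when passing from $g(x - mk\mu)$ to its residue representative, and must justify $r \mapsto -r$ as a genuine bijection of the index multiset \emph{before} concluding that the two products are equal. In particular $m(j-i)$ need not be reduced modulo $n$ to appear in $g$, so the periodicity must be invoked explicitly to pass to residues. Finally, for the closing assertion that $(m,n)$ ``defines a subset of all possible optimal search strategies,'' no exhaustiveness argument is required: the vanishing of all integrands established above is the first-order optimality condition itself, and the family $(m,n)$ is simply one identifiable class of its solutions, which it need not exhaust.
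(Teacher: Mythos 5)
Your proposal is correct and follows essentially the same route as the paper: both establish stationarity by showing the two products in Lemma~\ref{l_part_G} coincide via a modular reindexing that exploits the $\pi$-periodicity of $g$. Your shift-then-negate bijection ($k = j-i$ followed by $r \mapsto -r \bmod n$ on $\{1,\dots,n-1\}$) is precisely the paper's map $\sigma_i(j) = (2i-j) \bmod n$ (Lemma~\ref{a2}) written in different coordinates, so the arguments are equivalent up to bookkeeping.
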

\begin{proof}Evaluate the product in (\ref{part_G_1}) at point${\ \vec{\mu }}^*$ giving 
\begin{displaymath}
\prod_{j\in \ {{\mathcal N}}_i}{g(x+{\mu }^*_j-{\mu }^*_i)}.
\end{displaymath}
Assume that $\left(m,n\right)$ is an optimal search strategy. Then, the definition of the strategy $(m,n)$ implies 
that
\begin{displaymath}
{\mu }^*_j-{\mu }^*_i=m\left[-\left(2i-j\right)+i\right]\mu.  
\end{displaymath}
From definition of the modulo note that
\begin{displaymath}
2i-j=\left\lfloor \frac{2i-j}{n}\right\rfloor n+\left(2i-j\right)\bmod n.
\end{displaymath}
Define${\ \sigma }_i\left(j\right)=\left(2i-j\right)\bmod n$ and recall that$\ g(x)$ is periodical. Then,
\begin{displaymath}
g(x+{\mu }^*_j-{\mu }^*_i)=g\left(x-m{\sigma }_i\left(j\right)\mu+mi\mu\right).
\end{displaymath}
Lemma \ref{a2} implies that the map$\ {\sigma }_i\left(j\right)$ is a bijection from the set of$\ j$ to itself. 
Therefore, 
\setlength{\arraycolsep}{0.0em}
\begin{eqnarray*}
\prod_{j\in \ {{\mathcal N}}_i}{g\left(x+{\mu }^*_j-{\mu }^*_i\right)}
&{}={}&\prod_{j\in \ {{\mathcal N}}_i}{g\left(x-m{\sigma}_i\left(j\right)\mu+mi\mu\right)}  \\ 
&{}={}&\prod_{j\in \ {{\mathcal N}}_i}{g\left(x-mj\mu+mi\mu\right)}  \\ 
&{}={}&\prod_{j\in \ {{\mathcal N}}_i}{g\left(x-{\mu }^*_j+{\mu }^*_i\right).\nonumber} 
\end{eqnarray*}
\setlength{\arraycolsep}{5pt}
\noindent And Lemma~\ref{l_part_G} then entails that ${\partial }_iG\left({\vec{\mu }}^*\right)=0$ for all$\ i$. 
\end{proof}

For an optimal search strategy$\ \left(m,n\right)$, it will be useful in what follows to cast the integral in 
the following form.
\begin{lemma}
Let the search strategy be$\ \left(m,n\right)$. Then the probability of no detection is equal 
to$\ \tilde{G}\left(m,n\right)$.
\end{lemma}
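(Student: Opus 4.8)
The plan is to reduce the integral defining $G(\vec{\mu})$ for the strategy $(m,n)$ directly to $\tilde{G}(m,n)$ by a single change of the integration variable. First I would make explicit what the strategy $(m,n)$ prescribes: since consecutive observations are separated by the constant angle $m\mu$ with $\mu=\pi/n$, the relative observation angles take the form $\mu_i=\mu_0+im\mu$ for $i\in\{0,\dots,n-1\}$, where $\mu_0$ is an arbitrary common offset not fixed by the strategy. Substituting this into (\ref{G}) gives
\begin{displaymath}
G(\vec{\mu})=\frac{1}{\pi}\int_{-\frac{\pi}{2}}^{\frac{\pi}{2}}dx\ \prod_{i=0}^{n-1}g(x+\mu_0+im\mu).
\end{displaymath}

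Next I would perform the change of variable $x\to x-\mu_0$ to absorb the offset, so that each factor $g(x+\mu_0+im\mu)$ becomes $g(x+im\mu)$. Exactly as in the proof of Lemma \ref{l_part_G}, I would invoke Lemma \ref{a1} to conclude that the integral is invariant under this change of variable. The underlying reason is the $\pi$-periodicity of $g$: each factor $g(x+im\mu)$, and therefore the entire product, is $\pi$-periodic in $x$, so its integral over any interval of length $\pi$ is the same and the limits $[-\frac{\pi}{2},\frac{\pi}{2}]$ are preserved. This identifies the transformed integral with
\begin{displaymath}
\frac{1}{\pi}\int_{-\frac{\pi}{2}}^{\frac{\pi}{2}}dx\ \prod_{i=0}^{n-1}g(x+im\mu)=\tilde{G}(m,n),
\end{displaymath}
which is the desired equality.

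The only delicate point is the justification of the invariance under the shift: the substitution alone displaces the interval of integration, and it is the $\pi$-periodicity of the integrand, together with Lemma \ref{a1}, that allows the original limits to be restored without changing the value. I would therefore verify the periodicity of the product explicitly — it follows termwise from $g(x)=g(x+\pi)$ — and confirm that the hypotheses of Lemma \ref{a1} are satisfied before applying it. Everything else is a routine substitution, so I expect no further obstacle.
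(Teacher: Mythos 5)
Your proposal is correct and follows essentially the same route as the paper's proof: write $\mu_i=\mu_0+im\mu$, substitute into (\ref{G}), then shift $x\to x-\mu_0$ and invoke Lemma \ref{a1} (justified by the $\pi$-periodicity of the product) to restore the limits of integration. Your explicit verification of the periodicity hypothesis is a minor elaboration of what the paper leaves implicit, not a different argument.
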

\begin{proof}
First, remark that only the difference between two consecutive observations is specified in Lemma \ref{l_opt_strat}. Thus, liberty exists in the choice of the absolute reference angle. Without loss of generality, take ${\ \mu }^*_0$ to be this absolute reference angle. Then,
\begin{displaymath}
{\mu }^*_i={\mu }^*_0+mi\mu . 
\end{displaymath}
Substituting this expression into (\ref{G}) yields  
\begin{displaymath} 
G\left({\vec{\mu }}^*\right)=\frac{1}{\pi }\int^{\frac{\pi }{2}}_{-\frac{\pi }{2}}{dx\ 
 \prod_{ i =0}^{n-1}g\left(x+{\ \mu
}^*_0+mi\mu\right) }.
\end{displaymath}
Next, let$\ x\to x-{\ \mu }^*_0$. Then Lemma \ref{a1} entails that the domain of integration is invariant under such a shift of
the integration variable.                                                                                                                         
\end{proof}

\section{\label{sec_iv}A LOWER BOUND OF THE \\ PROBABILITY OF NO DETECTION}

In the previous section, a set of optimal search strategies was identified, the $(m,n)$ search strategies.
 In this section, a lower bound of the probability of no detection achievable with these strategies is proven.
 
Let us first introduce some useful notation and definitions. Let ${\gcd}\left(\cdot,\cdot \right)$ be 
the greatest common divider. Let$\ r$, $q$ and $p$ be strictly positive integers such that $m=pq$, $n=pr$ 
and $p={\gcd}(m,n)$. Let $i\in \left\{0,\dots ,n-1\right\}$, $j\in \left\{0,\dots ,r-1\right\}$ 
and $k\in \{1,\dots ,p\}$.  Define
\begin{displaymath}
h\left(x\right)=\ g\left(x\right)g\left(x+m\mu \right)\dots g\left(x+\left(r-1\right)m\mu \right).
\end{displaymath}
\begin{lemma}
\label{l_id}
The following identities hold:
\setlength{\arraycolsep}{0.0em}
\begin{eqnarray}
\prod^p_{k=1}{h\left(x+\left(k-1\right)\mu \right)}&{}={}&\prod^{n-1}_{i=0}{g(x+i\mu )}, \label{id_1} \\
\prod^{n-1}_{i=0}{g(x+mi\mu )}\ &{}={}&\ h{\left(x\right)}^p.\label{id_2}
\end{eqnarray}
\setlength{\arraycolsep}{5pt}
\end{lemma}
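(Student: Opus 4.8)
The plan is to read both identities as statements about reindexing finite products, using nothing beyond the $\pi$-periodicity of $g$ together with the arithmetic relations $m=pq$, $n=pr$, and $\gcd(q,r)=1$. The single recurring tool is that $g(x+i\mu)$ depends only on $i\bmod n$, since shifting $i$ by $n$ changes the argument by $n\mu=\pi$ and leaves $g$ invariant. Everything else is bookkeeping on the index sets.

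I would dispatch (\ref{id_2}) first, as it is the more direct. The crucial observation is that $g(x+mi\mu)$ is periodic in the integer index $i$ with period $r$: indeed $mr\mu=pqr\,\pi/(pr)=q\pi$, and since $g$ has period $\pi$ it also has period $q\pi$, so $g(x+m(i+r)\mu)=g(x+mi\mu)$. Writing $i=j+kr$ with $j\in\{0,\dots,r-1\}$ and $k\in\{0,\dots,p-1\}$ partitions $\{0,\dots,n-1\}$ into $p$ consecutive blocks of length $r$, and by the period-$r$ property each block contributes exactly the factor $h(x)=\prod_{j=0}^{r-1}g(x+jm\mu)$. Taking the product over the $p$ blocks gives $h(x)^p$, which is (\ref{id_2}).

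For (\ref{id_1}) I would expand the left-hand side as the double product $\prod_{k=1}^{p}\prod_{j=0}^{r-1} g\big(x+[(k-1)+jm]\mu\big)$, so that the factors are indexed by the integers $(k-1)+jm$ with $(k-1)\in\{0,\dots,p-1\}$ and $j\in\{0,\dots,r-1\}$. Because each factor depends only on its index modulo $n$, it suffices to show that the map $(\ell,j)\mapsto(\ell+jm)\bmod n$ is a bijection from $\{0,\dots,p-1\}\times\{0,\dots,r-1\}$ onto $\{0,\dots,n-1\}$. The two sets have the same cardinality $p\cdot r=n$, so I only need injectivity: if $\ell_1+j_1 m\equiv \ell_2+j_2 m\pmod n$, then reducing modulo $p$ (using $m\equiv0$ and $n\equiv0\pmod p$) forces $\ell_1=\ell_2$, after which $j_1 pq\equiv j_2 pq\pmod{pr}$ gives $j_1 q\equiv j_2 q\pmod r$, and $\gcd(q,r)=1$ makes $q$ invertible mod $r$, so $j_1=j_2$. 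Relabeling the factors by their images under this bijection collapses the double product into $\prod_{i=0}^{n-1}g(x+i\mu)$.

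The main obstacle is precisely this bijection in (\ref{id_1}); the block decomposition in (\ref{id_2}) is routine once the period-$r$ fact is noted. The cleanest presentation is either the short Chinese-remainder argument sketched above, or an appeal to a supplementary counting lemma of the same flavor as Lemma~\ref{a2}, which was already used to establish that a modular index map is a bijection. I would verify the whole scheme on the small case $m=2$, $n=4$ (so $p=2$, $q=1$, $r=2$), where both identities reduce to an explicit four-factor rearrangement, before committing to the general relabeling.
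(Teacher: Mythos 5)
Your proof is correct, and for (\ref{id_2}) it coincides with the paper's own argument: the paper splits the index set into blocks via its Lemma \ref{a4} (writing $i=j+r(k-1)$) and then uses $m(k-1)r\mu=(k-1)q\pi$ together with the $\pi$-periodicity of $g$, which is precisely your period-$r$ block decomposition. For (\ref{id_1}) you take a genuinely different, more self-contained route to the same reindexing. The paper certifies the relabeling by composing two auxiliary bijections: it first reduces $mj$ modulo $n$ through the decomposition $mj=n\lfloor qj/r\rfloor+p\,(qj\bmod r)$, invokes Lemma \ref{a3} (the map $j\mapsto qj\bmod r$ is a bijection because $\gcd(q,r)=1$), and then applies the positional bijection $(k-1,j)\mapsto(k-1)+pj$ of Lemma \ref{a4}. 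You instead treat the single map $(\ell,j)\mapsto(\ell+jm)\bmod n$ and prove it bijective by injectivity plus a cardinality count, with a Chinese-remainder-style reduction: working mod $p$ (where $m\equiv n\equiv 0$) pins down $\ell$, and invertibility of $q$ mod $r$ pins down $j$. The two maps are in fact identical, since $(\ell+jm)\bmod n=\ell+p\,(qj\bmod r)$ for $0\le\ell\le p-1$, so the difference is one of packaging rather than substance: your one-shot argument is shorter and avoids stating separate lemmas, while the paper's factorization isolates the number-theoretic ingredient (Lemma \ref{a3}) in reusable form --- indeed that lemma is invoked again in Lemma \ref{a8} to handle the analytical example of Section \ref{sec_v}, which your inline invertibility argument would not serve without repetition.
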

\begin{proof}
Consider the first identity. Using the definition of$\ h(x)$, write the left-hand side of (\ref{id_1}) as
\begin{displaymath}
\prod^p_{k=1}{\prod^{r-1}_{j=0}{g\left(x+mj\mu +\left(k-1\right)\mu \right).}}
\end{displaymath}
The definitions of$\ m$ and$\ n$ and of the modulo operation imply that
\begin{displaymath}
mj=n\left\lfloor {{\frac{qj}{r}}}\right\rfloor +p\left(qj\bmod r\right).
\end{displaymath}
Using this expression for $mj$ and the periodicity of$\ g\left(x\right)$, the right-hand side further becomes
equal to
\begin{displaymath} 
\prod^p_{k=1}{\prod^{r-1}_{j=0}{g\left(x+p(qj\bmod r)\mu +\left(k-1\right)\mu \right)}}.
\end{displaymath}
Next, from Lemma \ref{a3}, the map$\ \sigma \left(j\right)=qj\bmod r$ is known to be a bijection from the 
set of $j$ to itself. Therefore, the product can also be written as
\begin{displaymath}
\prod^p_{k=1}{\prod^{r-1}_{j=0}{g\left(x+pj\mu +\left(k-1\right)\mu \right)}.} 
\end{displaymath}
Finally, from Lemma \ref{a4}, the set of $(j,k)$ pairs can be mapped to the set of $i$ using the bijection$\ \sigma(k-1,j)=(k-1)+pj$. Therefore,  
\begin{displaymath} 
\prod^p_{k=1}{\prod^{r-1}_{j=0}{g\left(x+pj\mu +\left(k-1\right)\mu \right)}=}\prod^{n-1}_{i=0}{g\left(x+i\mu \right).}
\end{displaymath}

Now, consider the second identity. From Lemma \ref{a4}, the set of $i$ can be mapped to the set of $(j,k)$ pairs using 
the bijection $\sigma(j,k-1)=j+r(k-1)$. Therefore, the left-hand side of (\ref{id_2}) becomes
\begin{displaymath}
\prod^p_{k=1}{\prod^{r-1}_{j=0}{g\left(x+mj\mu +m\left(k-1\right)r\mu \right).}}
\end{displaymath}
Next, note that the definitions of $m$ and $n$ imply $mr=nq$ and that the definition of $\mu$ implies
 $n\mu =\pi $, from which follows that $m\left(k-1\right)r\mu =(k-1)q\pi $. This equality and the periodicity 
of $g\left(x\right)$, then imply that
\begin{displaymath}
g\left(x+mj\mu +m\left(k-1\right)r\mu \right)=g\left(x+mj\mu \right).
\end{displaymath}
Finally, from the definition of $h\left(x\right)$, 
\begin{displaymath} 
\prod^p_{k=1}{\prod^{r-1}_{j=0}{g\left(x+mj\mu \right)}}=\ \prod^p_{k=1}{h(x)}=\ h{\left(x\right)}^p.
\end{displaymath}
\end{proof}
\begin{theorem}[Lower Bound Estimate]
\label{t_lower}
For any $(m,n)$ search strategies, 
\begin{displaymath}
\tilde{G}\left(m,n\right)\ge \tilde{G}\left(1,n\right).
\end{displaymath}
\end{theorem}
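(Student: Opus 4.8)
The plan is to use the two algebraic identities of Lemma~\ref{l_id} to express both sides entirely in terms of $h$, and then to close the resulting gap with a pointwise arithmetic--geometric mean inequality. By identity~(\ref{id_2}) the strategy $(m,n)$ gives $\tilde{G}(m,n)=\frac{1}{\pi}\int^{\frac{\pi}{2}}_{-\frac{\pi}{2}}h(x)^p\,dx$, whereas by identity~(\ref{id_1}) the strategy $(1,n)$ gives $\tilde{G}(1,n)=\frac{1}{\pi}\int^{\frac{\pi}{2}}_{-\frac{\pi}{2}}\prod_{k=1}^{p}h(x+(k-1)\mu)\,dx$. Thus the theorem reduces to the statement that the integral of $h^p$ dominates the integral of a product of $p$ equally shifted copies of $h$.

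First I would record that $0\le g\le 1$, being a probability of no detection, so that $h\ge 0$ and each factor $h(x+(k-1)\mu)$ is non-negative. Applying the arithmetic--geometric mean inequality to these $p$ non-negative numbers (geometric mean below arithmetic mean, with the exponent matched to $p$) yields the pointwise bound
\begin{displaymath}
\prod_{k=1}^{p}h(x+(k-1)\mu)\le\frac{1}{p}\sum_{k=1}^{p}h(x+(k-1)\mu)^p .
\end{displaymath}
Integrating this inequality over $[-\pi/2,\pi/2]$ bounds $\pi\,\tilde{G}(1,n)$ above by $\frac{1}{p}\sum_{k=1}^{p}\int^{\frac{\pi}{2}}_{-\frac{\pi}{2}}h(x+(k-1)\mu)^p\,dx$.

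The crucial observation is then that each of these $p$ integrals equals $\int^{\frac{\pi}{2}}_{-\frac{\pi}{2}}h(x)^p\,dx$. Indeed, $h$ inherits the period $\pi$ from $g$, since $h(x+\pi)=\prod_{j=0}^{r-1}g(x+\pi+mj\mu)=h(x)$; hence $h^p$ is $\pi$-periodic, and integrating a $\pi$-periodic function over the length-$\pi$ window $[-\pi/2,\pi/2]$ is invariant under the shift $x\to x+(k-1)\mu$, which is exactly the invariance supplied by Lemma~\ref{a1}. Consequently each summand equals $\int^{\frac{\pi}{2}}_{-\frac{\pi}{2}}h(x)^p\,dx$, the sum collapses to $p$ copies of it, the prefactor $1/p$ cancels, and we obtain $\pi\,\tilde{G}(1,n)\le\int^{\frac{\pi}{2}}_{-\frac{\pi}{2}}h(x)^p\,dx=\pi\,\tilde{G}(m,n)$, which is the claim.

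I expect the main obstacle to be conceptual rather than computational: spotting that, once Lemma~\ref{l_id} recasts both quantities through $h$, the arbitrary shifts $(k-1)\mu$ become immaterial upon integration because $h$ is periodic, so that the AM--GM bound is saturated term by term. Getting the direction of AM--GM right and justifying the shift-invariance via Lemma~\ref{a1} are the only delicate points; an equivalent route replaces AM--GM by the generalized H\"older inequality with all exponents equal to $p$, which produces the same bound.
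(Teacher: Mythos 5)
Your proof is correct, and it departs from the paper's argument at precisely the point where the hard work begins, taking a genuinely different and much simpler route. Both arguments start identically: Lemma~\ref{l_id} is used to reduce the theorem to showing
\begin{displaymath}
\int^{\frac{\pi}{2}}_{-\frac{\pi}{2}} dx\ \prod_{k=1}^{p} h\left(x+(k-1)\mu\right) \le \int^{\frac{\pi}{2}}_{-\frac{\pi}{2}} dx\ h\left(x\right)^{p}.
\end{displaymath}
From there the paper builds a partially ordered chain of intermediate integrals $\int \lambda_l(x)h(x)^l\,dx$, peeling off one shifted factor of $h$ at a time, and closes each link by a recursive algorithm: repeated application of a two-term inequality of the type $uv\le\frac{1}{2}\left(u^2+v^2\right)$, interleaved with changes of variable to repair the negative exponents that the recursion produces, with termination in finitely many steps only when $l+1$ is a power of two (Lemma~\ref{a7}) and otherwise only in the limit of infinitely many iterations. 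You replace all of this machinery with a single pointwise application of the AM--GM inequality, $\prod_{k=1}^{p}a_k\le\frac{1}{p}\sum_{k=1}^{p}a_k^{p}$ for $a_k\ge 0$ (equivalently, generalized H\"older with all exponents equal to $p$), together with the observation that $h$ inherits $\pi$-periodicity from $g$, so that Lemma~\ref{a1} makes every shifted integral $\int h\left(x+(k-1)\mu\right)^p\,dx$ equal to $\int h(x)^p\,dx$ and the sum collapses. The two ingredients you need --- non-negativity of $h$ (a product of probabilities) and shift invariance of the integral --- are both available in the paper, so your argument is complete. What it buys is substantial: it avoids the paper's limiting argument and its attendant delicacies (negative powers of $h$, summing the geometric series, the case analysis of Lemma~\ref{a7}), and it exposes the structural reason the bound holds: once Lemma~\ref{l_id} has been applied, the shifts are invisible under the integral, and the paper's recursion is in effect a laborious re-derivation of this one AM--GM fact.
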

\begin{proof}
Consider a given $(m,n)$ pair. Then, Lemma \ref{l_id} allows to write
\setlength{\arraycolsep}{0.0em}
\begin{eqnarray*} 
\tilde{G}\left(1,n\right)&{}={}&\frac{1}{\pi }\int^{\frac{\pi }{2}}_{-\frac{\pi }{2}}{dx\
\prod^p_{k=1}{h\left(x+\left(k-1\right)\mu \right)}}\mbox{,} \\ 
\tilde{G}\left(m,n\right)&{}={}&\frac{1}{\pi }\int^{\frac{\pi }{2}}_{-\frac{\pi }{2}}{dx\ h{\left(x\right)}^p\ .} 
\end{eqnarray*}
\setlength{\arraycolsep}{5pt}
Next, let that$\ l\in \{0,\dots ,p-1\}$ and define
\begin{displaymath}
\lambda_l(x)= \prod^{p-l}_{k=1}{h\left(x+\left(k-1\right)\mu \right)}\mbox{.}
\end{displaymath}
Assume that 
\begin{displaymath}
\int^{\frac{\pi }{2}}_{-\frac{\pi }{2}}{dx\ \lambda_l(x)h{\left(x\right)}^l}
\le \int^{\frac{\pi }{2}}_{-\frac{\pi }{2}}{dx\ \lambda_{l+1}\left(x\right) h{\left(x\right)}^{l+1}.}
\end{displaymath}
Then
\setlength{\arraycolsep}{0.0em}
\begin{eqnarray*} 
\tilde{G}\left(1,n\right)&{}={}&\frac{1}{\pi }\int^{\frac{\pi }{2}}_{-\frac{\pi }{2}}{dx\
\lambda_0\left(x\right)}
\le  \frac{1}{\pi }\int^{\frac{\pi }{2}}_{-\frac{\pi }{2}}{dx\ \lambda_1\left(x\right) h{\left(x\right)}} 
\le  \dots  \\
&{}\le {}&  \frac{1}{\pi }\int^{\frac{\pi }{2}}_{-\frac{\pi }{2}}{dx\ \lambda_{l}\left(x\right)h{\left(x\right)}^l}  
\le  \dots  \\
&{}\le {}&  \frac{1}{\pi }\int^{\frac{\pi }{2}}_{-\frac{\pi }{2}}{dx\
\lambda_{p-1}\left(x\right)h{\left(x\right)}^{p-1}} 
= \frac{1}{\pi }\int^{\frac{\pi }{2}}_{-\frac{\pi }{2}}{dx\
h{\left(x\right)}^{p}}\\
&{}={}&\tilde{G}\left(m,n\right)\mbox{.}
\end{eqnarray*}
\setlength{\arraycolsep}{5pt}
\indent To show that the assumption holds true, proceed by generating a partially ordered set. 
Let $\delta_l =\left(p-l-1\right)\mu $ and note that$\
\lambda_{l}\left(x\right)=\lambda_{l+1}\left(x\right)h\left(x+\delta\right)$. Then, the first element of the partially
ordered set is
\begin{displaymath} 
\int^{\frac{\pi }{2}}_{-\frac{\pi }{2}}{dx\ \lambda_{l+1}\left(x\right)h(x+\delta )h{\left(x\right)}^l}.
\end{displaymath}
And the last element is 
\begin{displaymath}  
\int^{\frac{\pi }{2}}_{-\frac{\pi }{2}}{dx\ \lambda_{l+1}\left(x\right)h{\left(x\right)}^{l+1}}.
\end{displaymath} 
For greater clarity, the indice of $\lambda_{l+1}\left(x\right)$ and $\delta_l$ are suppressed in what follows. Next, apply 
recursively $t$ times$\ h{\left(x\right)}^ah{\left(x+\delta \right)}^b\le
\frac{1}{2}h{\left(x\right)}^{a-b}\left[{h(x)}^{2a}+{h(x+\delta )}^{2a}\right]$ from the first element. 
After summing the resulting geometric series, the element generated is 
\setlength{\arraycolsep}{0.0em}
\begin{eqnarray*}
\int^{\frac{\pi }{2}}_{-\frac{\pi }{2}}{dx\ \lambda
\left(x\right)\bigg\{\left[1-{\left({{\frac{1}{2}}}\right)}^t\right]h{\left(x\right)}^{l+1}}\\
{+{\left({{\frac{1}{2}}}\right)}^th{\left(x\right)}^{l+1-2^t}h{\left(x+\delta \right)}^{2^l}\bigg\}.} 
\end{eqnarray*}
\setlength{\arraycolsep}{5pt}
Remark however that beyond a recursion step defined such that${\ 2}^{t+1}>l+1\ge 2^t$, the power 
of $h(x)$ becomes negative in the second term. This is problematic because the last element of the 
partially ordered set has only positive powers of$\ h(x)$. 
To circumvent this problem, let $x\rightarrow x-\delta$ in the the second term of 
the element. Since the domain of integration remains unchanged (Lemma \ref{a1}),
 $h(x)$ is even (Lemma \ref{a5}), and $\lambda(x)=\lambda(x-\delta)$ (Corollary \ref{a6}), 
 then the second term is also equal to 
\setlength{\arraycolsep}{0.0em}
\begin{eqnarray*}
\int^{\frac{\pi }{2}}_{-\frac{\pi }{2}}{dx\ \lambda
\left(-x\right)h{\left(-x\right)}^{l+1-2^t}h{\left(-x+\delta\right)}^{2^l}} 
\end{eqnarray*}
\setlength{\arraycolsep}{5pt}
Finally, letting $x\rightarrow -x$, the $t$-element becomes
\setlength{\arraycolsep}{0.0em}
\begin{eqnarray*}
\int^{\frac{\pi }{2}}_{-\frac{\pi }{2}}{dx\ \lambda
\left(x\right)\bigg\{\left[1-{\left({{\frac{1}{2}}}\right)}^t\right]h{\left(x\right)}^{l+1}}\\
{+{\left({{\frac{1}{2}}}\right)}^th{\left(x+\delta\right)}^{l+1-2^t}h{\left(x\right)}^{2^l}\bigg\}} 
\end{eqnarray*}
\setlength{\arraycolsep}{5pt}

\noindent And the power of $h(x)$ is now greater than or equal to that of $h\left(x+\delta \right)$ in the second 
term allowing again the recursive application of the inequality. 

The algorithm thus loops through successive 
recursion steps and changes of variable. Note that for $l+1=2^t$, the last element is generated after the first 
iteration. Lemma \ref{a7} states that this is the only case for which the algorithm terminates in a finite number of
iterations. For all other cases,  the last element arises by letting the number of iterations go to infinity.   
\end{proof}
\section{\label{sec_v}AN ANALYTICAL EXAMPLE}
In the previous section, a lower bound of the probability of no detection was 
found for an arbitrary $\ g(x)$. In this section, 
the probability of no detection is evaluated and the inequality of Theorem \ref{t_lower} is explicitly verified
when $\ g\left(x\right)=\sin  \left(x\right)^2$ 
\begin{lemma}
Let$\ g\left(x\right)=\sin  \left(x\right)^2$. Then
\begin{equation}
\label{G_ex}
\tilde{G}\left(m,n\right)=\frac{2^{p}\left(2p-1\right)!!\ }{4^np!}.
\end{equation}
\end{lemma}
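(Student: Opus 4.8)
The plan is to reduce everything to a single Wallis-type integral by exploiting the factorization already established in Lemma~\ref{l_id}. Starting from identity~(\ref{id_2}), which states $\prod_{i=0}^{n-1} g(x+mi\mu) = h(x)^p$, the probability of no detection becomes
\begin{displaymath}
\tilde{G}(m,n) = \frac{1}{\pi}\int_{-\frac{\pi}{2}}^{\frac{\pi}{2}} dx\ h(x)^p,
\end{displaymath}
so it suffices to obtain a closed form for $h(x)$ and then integrate its $p$-th power over the fixed domain.

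First I would evaluate $h(x)$ explicitly. By definition $h(x)=\prod_{j=0}^{r-1} g(x+mj\mu)=\prod_{j=0}^{r-1}\sin^2\!\left(x+jq\pi/r\right)$, where I used $m\mu = q\pi/r$. Since $p=\gcd(m,n)$ forces $\gcd(q,r)=1$, the map $j\mapsto qj\bmod r$ permutes $\{0,\dots,r-1\}$, and the $\pi$-periodicity of $\sin^2$ lets me replace $jq$ by $j$, giving $h(x)=\prod_{j=0}^{r-1}\sin^2\!\left(x+j\pi/r\right)$. The classical product-to-sine identity $\prod_{j=0}^{r-1}\sin(x+j\pi/r)=\sin(rx)/2^{r-1}$ then yields the compact form $h(x)=\sin^2(rx)/4^{r-1}$.

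Next, I would substitute this into the integral and use $(r-1)p = n-p$ to obtain $\tilde{G}(m,n)=\frac{1}{\pi\,4^{n-p}}\int_{-\pi/2}^{\pi/2}\sin^{2p}(rx)\,dx$. The scaling $u=rx$ sends the domain to $[-r\pi/2,\,r\pi/2]$, which spans exactly $r$ full periods of $\sin^{2p}$; the Jacobian $1/r$ cancels this factor, leaving $\int_{-\pi/2}^{\pi/2}\sin^{2p}(rx)\,dx=\int_0^\pi\sin^{2p}u\,du = \pi\,(2p-1)!!/(2p)!!$ by the Wallis formula. Finally, writing $(2p)!!=2^p\,p!$ and simplifying $4^{n-p}2^p = 4^n/2^p$ produces $\tilde{G}(m,n)=2^p(2p-1)!!/(4^n\,p!)$, as claimed.

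The main obstacle is the explicit evaluation of $h(x)$: establishing the product-to-sine identity $\prod_{j=0}^{r-1}\sin(x+j\pi/r)=\sin(rx)/2^{r-1}$ (most cleanly via the factorization of $z^{r}-\bar z^{r}$ over the $r$-th roots of unity, or equivalently the Chebyshev factorization of $\sin(rx)$), together with the justification that the $q$-shifted angles reduce to the canonical angles $j\pi/r$ through coprimality and periodicity. Once $h(x)$ is in hand, the remaining steps are the routine periodicity argument for the change of variable and the standard Wallis integral, so I expect no further difficulty there.
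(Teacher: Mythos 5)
Your proposal is correct and follows essentially the same path as the paper's proof: reduce to $\int h(x)^p\,dx$ via identity~(\ref{id_2}), replace the shifted angles $qj\pi/r$ by the canonical $j\pi/r$ through coprimality and periodicity (this is exactly the paper's Lemma~\ref{a8}, resting on Lemma~\ref{a3}), apply the product-to-sine identity $\sin(rx)=2^{r-1}\prod_{j=0}^{r-1}\sin(x+j\pi/r)$, and finish with the Wallis integral and $(2p)!!=2^pp!$. The only cosmetic difference is that you evaluate the Wallis integral explicitly via the change of variable $u=rx$ and periodicity, whereas the paper cites a table of integrals.
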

\noindent \begin{proof}
Recall that
\begin{displaymath} 
\tilde{G}\left(m,n\right)=\ \frac{1}{\pi }\int^{\frac{\pi }{2}}_{-\frac{\pi }{2}}{dx\
\prod^p_{k=1}{\prod^{r-1}_{j=0}{g\left(x+jm\mu \right)}}}.
\end{displaymath}
Then Lemma \ref{a8} implies that 
\begin{displaymath} 
\tilde{G}\left(m,n\right)=\frac{1}{\pi }\int^{\frac{\pi }{2}}_{-\frac{\pi }{2}}{dx\ \prod^p_{k=1}{\prod^{r-1}_{j=0}{g\left(x+\frac{j\pi }{r}\right)}}}.
\end{displaymath}
Let$\ g\left(x\right)={{\sin  \left(x\right)\ }}^2$ and use the identity \cite{gra1979}:
\begin{displaymath} 
{\sin  \left(rx\right)}=2^{r-1}\prod^{r-1}_{j=0}{{\sin  \left(x+\frac{j\pi }{r}\right)\ }}.
\end{displaymath}
Then,
\setlength{\arraycolsep}{0.0em}
\begin{eqnarray*}
\tilde{G}\left(m,n\right)&{}={}&\frac{1}{\pi }\int^{\frac{\pi }{2}}_{-\frac{\pi }{2}}{dx\ }\prod^p_{k=1}{\frac{1}{4^{r-1}}{\sin  {\left(rx\right)}^2\ }} \\ 
&{}={}&\frac{1}{\pi }\int^{\frac{\pi }{2}}_{-\frac{\pi }{2}}{dx\ }\frac{1}{4^{p(r-1)}}{\sin  {\left(rx\right)}^{2p}\ } 
\end{eqnarray*}
\setlength{\arraycolsep}{5pt}
Finally, carrying out the integral \cite{zwi1996} and recalling that $n=pr$,
\begin{displaymath} 
\tilde{G}\left(m,n\right)=\frac{1}{4^{n-p}}\frac{(2p-1)!!}{2p!!}.
\end{displaymath}
Since$\ 2p!!=2^pp!$, (\ref{G_ex}) follows. 
\end{proof}                                                                                          

\begin{corollary}
For $p=\gcd \left(1,n\right)=1$, 
\begin{displaymath}
\ \tilde{G}\left(1,n\right)=\frac{2}{4^{n}}. 
\end{displaymath}
And Theorem \ref{t_lower} follows since 
$\left(2p-1\right)!!=1\times 3\times \dots \times \left(2p-1\right)\ge 1\times 2\times\dots \times
p=p!$.
\end{corollary}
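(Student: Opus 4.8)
The plan is to read off both assertions directly from the closed form $\tilde{G}(m,n)=2^{p}(2p-1)!!/(4^{n}p!)$ recorded in~(\ref{G_ex}) of the preceding Lemma for $g(x)=\sin(x)^2$, so that no further integration is required; the entire corollary reduces to substituting a special value and comparing two ratios.

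First I would establish the explicit evaluation. Since $\gcd(1,n)=1$ for every positive integer $n$, the strategy $(1,n)$ forces $p=1$ (and correspondingly $q=m=1$, $r=n$). Substituting $p=1$ into~(\ref{G_ex}) gives $\tilde{G}(1,n)=2^{1}(2\cdot 1-1)!!/(4^{n}\,1!)$, and because $1!!=1$ and $1!=1$ this collapses to $\tilde{G}(1,n)=2/4^{n}$, exactly as stated.

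Second, I would verify Theorem~\ref{t_lower} in this example by comparing the two closed forms. The inequality $\tilde{G}(m,n)\ge\tilde{G}(1,n)$ is, after clearing the common factor $1/4^{n}$, equivalent to $2^{p}(2p-1)!!/p!\ge 2$. I would bound the left side below using the stated fact $(2p-1)!!\ge p!$: replacing $(2p-1)!!$ by the smaller $p!$ yields the chain $\tilde{G}(m,n)=2^{p}(2p-1)!!/(4^{n}p!)\ge 2^{p}p!/(4^{n}p!)=2^{p}/4^{n}$, and since $p\ge 1$ gives $2^{p}\ge 2$, this closes as $\tilde{G}(m,n)\ge 2^{p}/4^{n}\ge 2/4^{n}=\tilde{G}(1,n)$.

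The only step carrying any content is the inequality $(2p-1)!!\ge p!$, which I would settle by a factorwise comparison. Both products have exactly $p$ factors; writing the $k$-th factor of $(2p-1)!!=1\times 3\times\cdots\times(2p-1)$ as $2k-1$ and the $k$-th factor of $p!=1\times 2\times\cdots\times p$ as $k$, one has $2k-1\ge k$ for every $k\ge 1$, and multiplying these $p$ inequalities gives $(2p-1)!!\ge p!$. I expect no genuine obstacle, since everything is arithmetic on an already derived formula; the only points demanding care are tracking the $p=1$ specialization correctly and ensuring the factorwise bound is applied over the full range $k=1,\dots,p$.
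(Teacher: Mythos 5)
Your proposal is correct and matches the paper's own argument exactly: both substitute $p=1$ into the closed form~(\ref{G_ex}) to get $\tilde{G}(1,n)=2/4^{n}$, and both reduce the inequality to the termwise comparison $(2p-1)!!=1\times 3\times\dots\times(2p-1)\ge 1\times 2\times\dots\times p=p!$. Your only addition is making explicit the remaining step $2^{p}\ge 2$ for $p\ge 1$, which the paper leaves implicit.
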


\section{\label{sec_vi}CONCLUSIONS AND FUTURE WORK}

In this paper, the angular dependence of the detection process which is often overlooked for search and detection 
mission is explicitly accounted for by assuming that the target possesses rectangular symmetry.  One major consequence of this approximate symmetry is that the long side of a target is endowed with the largest 
cross section, which results in the highest probability of detection given that the target is observed only once. 
However, since the orientation of the target is in general unknown, there is likelihood that it will be imaged on 
the short side, i.e., the smallest cross-section. Therefore, the probability of not detecting the target may not be 
zero even if the search area is entirely covered. Making several observations 
of the target in order to increase the change of observing its long side is one way to address this problem.
 
Assuming that the observations are independent, an optimal search strategy is then to observe the target such 
that the separation between two consecutive observations is a constant and equal to a multiple of $180$ degrees 
divided by the number of observations. The resulting tactic is simple, intuitive and robust (as no prior knowledge 
of the target orientation is required). For example, two observations separated by $90$ degrees or three observations separated by $60$ degrees will minimize the 
probability of no detection.  

Having shown that one of these search strategies leads to a lower bound of the probability of no detection, work is currently underway to 
prove that it is also a globally minimal search strategy. Another logical extension 
of this work is to relax the assumption that information provided by subsequent observations is uncorrelated 
(i.e., follows a Bernoulli process) as it entails that the probability of not 
detecting a target decreases with increasing number of observations even if the observations are co-linear. This 
is questionable as no additional information is gained. 

\section{ACKNOWLEDGMENTS}
One of the authors (A. Bourque) would like to acknowledge A. Percival from Defence R\&D Canada - Atlantic for bringing
to his attention the issue of correlation effects in the detection process. 

\appendix[Supplementary Lemmata]
\renewcommand{\thetheorem}{A.\arabic{theorem}}
\begin{lemma}[Rotational Invariance]
\label{a1}
Let $\omega \in \mathbb{R}$ and $h(x) = h(x+\pi)$.
Then 
\begin{displaymath}
\int_{-\frac{\pi}{2}}^{\frac{\pi}{2}} dx\ h(x-\omega)  = \int_{-\frac{\pi}{2}}^{\frac{\pi}{2}} dx\ h(x).
\end{displaymath}
\end{lemma}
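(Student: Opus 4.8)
The plan is to reduce the statement to the elementary fact that the integral of a $\pi$-periodic function over any interval of length $\pi$ does not depend on where that interval is located. First I would substitute $y = x - \omega$ in the left-hand integral. This leaves the integrand $h(y)$ unchanged but translates the domain, giving
\begin{displaymath}
\int_{-\frac{\pi}{2}}^{\frac{\pi}{2}} dx\ h(x-\omega) = \int_{-\frac{\pi}{2}-\omega}^{\frac{\pi}{2}-\omega} dy\ h(y).
\end{displaymath}
It therefore suffices to show that the right-hand side equals $\int_{-\pi/2}^{\pi/2} h$, i.e.\ that its value is independent of $\omega$.

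The cleanest route, which handles every real $\omega$ at once, is to set $F(\omega) = \int_{-\pi/2-\omega}^{\pi/2-\omega} h(y)\,dy$ and differentiate under the limits. By the fundamental theorem of calculus,
\begin{displaymath}
F'(\omega) = -h\!\left(\tfrac{\pi}{2}-\omega\right) + h\!\left(-\tfrac{\pi}{2}-\omega\right).
\end{displaymath}
The hypothesis $h(x)=h(x+\pi)$ gives $h(-\pi/2-\omega) = h(\pi/2-\omega)$, so $F'(\omega)\equiv 0$. Hence $F$ is constant and $F(\omega) = F(0) = \int_{-\pi/2}^{\pi/2} h$, which is exactly the claim.

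If one prefers not to assume $h$ differentiable, I would instead argue by cut-and-paste. First reduce $\omega$ modulo $\pi$: writing $\omega = k\pi + \omega_0$ with $k$ an integer and $\omega_0 \in [0,\pi)$, periodicity makes $h(\cdot-\omega)=h(\cdot-\omega_0)$, so we may assume $0\le\omega<\pi$. Then split the shifted domain at $-\pi/2$ (which lies inside it precisely because $0\le\omega<\pi$),
\begin{displaymath}
\int_{-\frac{\pi}{2}-\omega}^{\frac{\pi}{2}-\omega} h = \int_{-\frac{\pi}{2}-\omega}^{-\frac{\pi}{2}} h + \int_{-\frac{\pi}{2}}^{\frac{\pi}{2}-\omega} h,
\end{displaymath}
and in the first (overhanging) piece substitute $z=y+\pi$, so that periodicity gives $h(y)=h(z)$ and the range becomes $[\pi/2-\omega,\pi/2]$. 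Gluing this onto the second piece reconstitutes $\int_{-\pi/2}^{\pi/2} h$.

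The only point requiring genuine care is that $\omega$ ranges over all of $\mathbb{R}$: the derivative argument absorbs this uniformly, whereas the cut-and-paste argument needs the preliminary reduction modulo $\pi$ and a check that the split point actually lies in the translated interval. Beyond this bookkeeping the lemma is routine, and the mild regularity it uses (continuity throughout, differentiability for the first route) is certainly available for the detection function $g$, which is already differentiated in Lemma~\ref{l_part_G}.
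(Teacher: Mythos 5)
Your proposal is correct, and your second (``cut-and-paste'') argument is precisely the paper's own proof: the paper shifts the variable so the domain becomes $[-\frac{\pi}{2}-\omega,\frac{\pi}{2}-\omega]$, splits it at $-\frac{\pi}{2}$, translates the overhanging piece $[-\frac{\pi}{2}-\omega,-\frac{\pi}{2})$ by $\pi$, and invokes periodicity to glue $[\frac{\pi}{2}-\omega,\frac{\pi}{2})$ back on. Your primary route---setting $F(\omega)=\int_{-\pi/2-\omega}^{\pi/2-\omega}h(y)\,dy$, differentiating in $\omega$, and using periodicity to cancel the two boundary terms---is genuinely different: it trades the interval bookkeeping for a regularity hypothesis (continuity of $h$, so the fundamental theorem of calculus applies to the moving limits), and in exchange handles every real $\omega$ uniformly. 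The trade is harmless here, since the lemma is only ever applied to products of $g$ and $g'$, but the cut-and-paste version is the more economical one, needing only integrability and periodicity. Finally, your reduction of $\omega$ modulo $\pi$ patches a point the paper glosses over: as written, the paper's split of $[-\frac{\pi}{2}-\omega,\frac{\pi}{2}-\omega]$ at $-\frac{\pi}{2}$ is legitimate only when $0\le\omega\le\pi$, whereas the lemma is stated for all $\omega\in\mathbb{R}$; either your preliminary reduction or your derivative argument is what makes the general case rigorous.
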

\begin{proof}
Let $x \rightarrow x +\omega$ on the RHS and break the integration interval of the resulting integral
into $[-\frac{\pi}{2}-\omega, -\frac{\pi}{2}[$ and $[-\frac{\pi}{2}, \frac{\pi}{2} -\omega]$.
Then let $x\to x-\pi $ in the integral over the first interval and recall that by assumption $\ h(x)$ is periodic.
\end{proof}
\begin{lemma}
\label{a2}
Let $i\in \left\{0,\dots ,n-1\right\}$, ${{\mathcal N}}_i=\left\{0,\dots ,n-1\right\}\backslash \{i\}$ 
and $j\in $ ${{\mathcal N}}_i$. Then ${\sigma }_i\left(j\right)=\left(2i-j\right)\bmod n$ is a 
bijection from ${{\mathcal N}}_i$ to itself and its own inverse.
\end{lemma}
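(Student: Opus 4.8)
The plan is to prove the two claims separately: first that $\sigma_i$ genuinely maps $\mathcal{N}_i$ into $\mathcal{N}_i$, and then that it is an involution, from which both bijectivity and the self-inverse property follow at once.

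First I would check that the map lands in $\mathcal{N}_i$. By the very definition of the modulo operation given above, $\sigma_i(j)=(2i-j)\bmod n$ lies in $\{0,\dots,n-1\}$ for every integer argument, so it remains only to verify that the excluded value $i$ is never attained. Suppose, for contradiction, that $\sigma_i(j)=i$ for some $j\in\mathcal{N}_i$. Then $2i-j\equiv i \pmod n$, hence $j\equiv i \pmod n$; since both $i$ and $j$ lie in $\{0,\dots,n-1\}$, this forces $j=i$, contradicting $j\neq i$. Therefore $\sigma_i(j)\in\mathcal{N}_i$, and $\sigma_i$ is a self-map of $\mathcal{N}_i$.

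Next I would show $\sigma_i$ is its own inverse by computing $\sigma_i(\sigma_i(j))$. Writing $k=\sigma_i(j)$, one has $k\equiv 2i-j \pmod n$, so $2i-k\equiv 2i-(2i-j)=j \pmod n$. Hence $\sigma_i(k)=(2i-k)\bmod n$ is the unique representative of $j$ modulo $n$ lying in $\{0,\dots,n-1\}$, which is $j$ itself because $j\in\{0,\dots,n-1\}$. Thus $\sigma_i\circ\sigma_i=\mathrm{id}$ on $\mathcal{N}_i$. Since $\sigma_i$ then has a two-sided inverse, namely itself, it is a bijection that coincides with its own inverse.

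The computations are elementary, so the only point requiring care is the repeated passage between a congruence modulo $n$ and an honest equality of integers: at each step I would emphasize that reducing modulo $n$ depends only on the residue class, and that membership in the complete residue system $\{0,\dots,n-1\}$ upgrades a congruence to an equality. This is the crux that both rules out $\sigma_i(j)=i$ and turns $2i-k\equiv j$ into $\sigma_i(k)=j$.
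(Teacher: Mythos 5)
Your proof is correct, and it improves on the paper's own argument in one substantive respect. Both proofs center on the same key fact---that $\sigma_i$ is an involution, from which bijectivity and the self-inverse property follow together---but you execute the computation with residue-class arithmetic (a congruence modulo $n$ plus uniqueness of representatives in $\{0,\dots,n-1\}$ upgrades to an equality of integers), whereas the paper unwinds its floor-based definition of the modulo operation twice and manipulates the resulting floor expressions directly; the paper's version of that computation is in fact garbled (it asserts $\frac{j}{n}<n$ where what is needed is $0\le \frac{j}{n}<1$, and its displayed floor identity mixes terms inconsistently), so your congruence formulation is cleaner and easier to verify. More importantly, the paper never checks that $\sigma_i$ actually maps ${\mathcal N}_i$ into itself, i.e.\ that $\sigma_i(j)\neq i$ whenever $j\neq i$---a point that must be settled before one can speak of a bijection from ${\mathcal N}_i$ to itself, since a priori the image could hit the excluded value $i$. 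Your contradiction argument ($\sigma_i(j)=i$ forces $j\equiv i \pmod{n}$, hence $j=i$) closes exactly that gap. In short: same skeleton (involution implies bijection), but your write-up is both more rigorous in its modular arithmetic and more complete in verifying well-definedness.
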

\begin{proof}
Composition gives 
\begin{displaymath}
{\sigma }_i\left({\sigma }_i\left(j\right)\right)=\left(2i-\left(2i-j\right)\bmod n\right)\bmod n.
\end{displaymath} 
Use the definition of the modulo twice to give
\setlength{\arraycolsep}{0.0em}
\begin{displaymath}
{\sigma }_i\left({\sigma }_i\left(j\right)\right)
=\ j+\left\lfloor \frac{\left(2i-j\right)}{n}\right\rfloor n-\left\lfloor \frac{j+\left\lfloor \frac{\left(2i-j\right)}{n}\right\rfloor n}{n}\right\rfloor n. 
\end{displaymath}
\setlength{\arraycolsep}{5pt}
Recall that$\ j\in {{\mathcal N}}_i$ and note that$\ \frac{j}{n}<n$. Then $\ \left\lfloor \frac{j}{n}+\left\lfloor \frac{\left(2i-j\right)}{n}\right\rfloor n\right\rfloor =\left\lfloor \frac{\left(2i-j\right)}{n}\right\rfloor n$ 
and${\ \sigma }_i\left({\sigma }_i\left(j\right)\right)=j$.
\end{proof}
\begin{lemma}
\label{a3}
Let $r$, $q$ be positive integers such that $\gcd\left(r,q\right)=1$, i.e., $r$ and $q$ are co-primes. 
Let $i\in \left\{0,\dots ,\ r-1\right\}.$ Then the map $\sigma (i)=qi\bmod r$ is a bijection of the set 
of $i$ to itself.
\end{lemma}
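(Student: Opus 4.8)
The plan is to exploit the fact that $\sigma$ maps the finite set $\{0,\dots,r-1\}$ to itself, so that it suffices to prove injectivity: any injection of a finite set into itself is automatically surjective and hence a bijection. First I would observe that the very definition of the modulo operation guarantees $\sigma(i)=qi\bmod r\in\{0,\dots,r-1\}$, so $\sigma$ is indeed a self-map of the set of $i$, and the pigeonhole principle will close the argument once injectivity is in hand.

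Next I would establish injectivity. Suppose $\sigma(i_1)=\sigma(i_2)$ for two elements $i_1,i_2\in\{0,\dots,r-1\}$. By definition of the modulo this means $qi_1$ and $qi_2$ leave the same remainder upon division by $r$, i.e.\ $r$ divides $q(i_1-i_2)$. The crux of the argument, and the only place where the hypothesis is used, is to pass from $r\mid q(i_1-i_2)$ to $r\mid(i_1-i_2)$. This is exactly where coprimality enters: since $\gcd(q,r)=1$, B\'ezout's identity provides integers $a,b$ with $aq+br=1$, whence $i_1-i_2=aq(i_1-i_2)+br(i_1-i_2)$, and both terms on the right are divisible by $r$. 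Therefore $r\mid(i_1-i_2)$. But $i_1,i_2\in\{0,\dots,r-1\}$ forces $\lvert i_1-i_2\rvert<r$, so the only multiple of $r$ that the difference can equal is zero; hence $i_1=i_2$ and $\sigma$ is injective. Combined with the finiteness observation, this completes the proof.

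I expect the coprimality step to be the main, indeed the only non-routine, obstacle; everything else is bookkeeping with the definition of the modulo and the pigeonhole principle. As an alternative that sidesteps the injectivity-plus-finiteness route, I could instead construct the inverse explicitly: coprimality guarantees a multiplicative inverse $q^{-1}$ of $q$ modulo $r$, and then one verifies directly that $\sigma^{-1}(i)=q^{-1}i\bmod r$ inverts $\sigma$. This displays the bijection constructively and mirrors the self-inverse computation already used in Lemma~\ref{a2}, so either presentation would fit the style of the preceding supplementary lemmata.
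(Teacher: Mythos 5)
Your proposal is correct and takes essentially the same route as the paper: the paper also reduces to injectivity (phrased as a proof by contradiction), passes from $r \mid q(u-v)$ to $r \mid (u-v)$ via coprimality, and concludes from the bound $|u-v|<r$, implicitly using finiteness to get surjectivity. The only difference is presentational: you justify the coprimality step explicitly with B\'ezout's identity, whereas the paper asserts it as a known property of co-primes.
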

\begin{proof}
Proceed with a proof by contradiction. Assume this map is not a bijection. Then there exists a pair $u,v\in $ 
$\left\{0,\dots ,\ r-1\right\}$ such that $u\ne v$ and $qu\bmod r=qv\bmod r$. Next, assume 
that $u>v$ then $q\left(u-v\right)\bmod r=qw\bmod r=0$ where $w=u-v$. This implies that $qw=ry$ with $y$ a 
positive integer, as $w,q>0$. Because $q$ and $r$ are co-primes, i.e., ${{\gcd} \left(r,q\right)\ }=1$ then 
$w=rz$ with $z>0$, which leads to a contraction as $w=u-v<r-1$.                                                
\end{proof}
\begin{lemma}
\label{a4}
Let $u\in \{0,\dots a-1\}$, $v\in \{0,\dots \ ,b-1\}$, and $w\in \{0,\dots ,n-1\}$ where $n=ab$. 
Then $\sigma\left(u,v\right)=\ u+av$ and $\sigma^{-1}\left(w\right)=\left(w\bmod a,\left\lfloor \frac{w}{a}\right\rfloor \right)$
 are bijections. 
\end{lemma}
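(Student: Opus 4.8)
The plan is to show directly that $\sigma$ and $\sigma^{-1}$ are mutually inverse maps: once the two compositions $\sigma\circ\sigma^{-1}=\mathrm{id}$ and $\sigma^{-1}\circ\sigma=\mathrm{id}$ are established, each map is automatically injective and surjective, hence a bijection, and each is the inverse of the other. As a consistency check the two sets have the same cardinality, $|\{0,\dots,a-1\}\times\{0,\dots,b-1\}|=ab=n$, and the maps are nothing more than the mixed-radix decomposition of an integer into a ``digit'' $u$ below the base $a$ and a ``carry'' $v$. Before composing, however, I would first confirm that each map actually lands in its claimed codomain, since otherwise the compositions are not even well defined.

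For the range checks, since $0\le u\le a-1$ and $0\le v\le b-1$ the value $u+av$ satisfies $0\le u+av\le (a-1)+a(b-1)=ab-1=n-1$, so $\sigma$ maps the product set into $\{0,\dots,n-1\}$. Conversely, for $w\in\{0,\dots,n-1\}$ the term $w\bmod a$ lies in $\{0,\dots,a-1\}$ by the definition of the modulo, while $0\le\lfloor w/a\rfloor\le\lfloor (ab-1)/a\rfloor=b-1$, so $\sigma^{-1}$ maps $\{0,\dots,n-1\}$ into the product set.

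With the codomains secured, the composition $\sigma\circ\sigma^{-1}$ follows immediately from the paper's own definition of the modulo, namely $w\bmod a=w-\lfloor w/a\rfloor a$, which gives $\sigma(\sigma^{-1}(w))=(w\bmod a)+a\lfloor w/a\rfloor=w$. For the reverse composition $\sigma^{-1}\circ\sigma$ the decisive step, and the only point I expect to require genuine care, is the floor identity $\lfloor (u+av)/a\rfloor=v$; this holds precisely because $0\le u<a$ forces $\lfloor u/a\rfloor=0$, so that $\lfloor u/a+v\rfloor=v$. Granting this, $(u+av)\bmod a=(u+av)-va=u$, whence $\sigma^{-1}(\sigma(u,v))=(u,v)$. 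Both compositions thus reduce to the identity, and the lemma follows. The main obstacle is therefore not conceptual but purely a matter of bookkeeping with the floor and modulo operators, and the bound $0\le u<a$ is exactly what makes that bookkeeping collapse.
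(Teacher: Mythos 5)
Your proof is correct and follows essentially the same route as the paper's: both compositions $\sigma\circ\sigma^{-1}$ and $\sigma^{-1}\circ\sigma$ are shown to be the identity using the definition of the modulo operation and the bound $0\le u<a$. Your additional verification that each map lands in its claimed codomain is a small but worthwhile refinement that the paper's proof leaves implicit.
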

\begin{proof}
Composition gives 
\begin{displaymath}
\sigma\left(\sigma^{-1}(w)\right)\ =\ \ w\bmod a+\ a\left\lfloor \frac{w}{a}\right\rfloor =w,
\end{displaymath}
where the last equality follows from the definition of the modulo operation.  
Similarly, 
\begin{displaymath}
\sigma^{-1}\left(\sigma\left(u,v\right)\right)=\ \left[(u+av)\ \bmod a,\left\lfloor \frac{u+av}{a}\right\rfloor \right] 
=\left(u,v\right),
\end{displaymath} 
where the last equality follows from the definition of the modulo operation and since $u<a$. 
Therefore,  $\sigma(u,v)$ and $\sigma^{-1}\left(w\right)$ are both one-to-one, onto, and inverse of each other.                      
\end{proof}
\begin{lemma}
\label{a5}
$h\left(x\right)=h\left(-x\right)$.
\end{lemma}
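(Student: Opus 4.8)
The plan is to prove that $h$ is even directly from the two defining symmetries of $g$ — that $g$ is even and that $g$ is $\pi$-periodic — together with the single arithmetic identity $rm\mu = q\pi$. Recall that $h(x)=\prod_{j=0}^{r-1} g(x+jm\mu)$, and that since $m=pq$, $n=pr$ and $\mu=\pi/n$ one has $r\mu=\pi/p$, whence $rm\mu = m\pi/p = q\pi$, an integer multiple of the period $\pi$. This identity is the one fact that makes the argument close up, so I would record it first.

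First I would apply the evenness of $g$ termwise to the reflected product, writing $h(-x)=\prod_{j=0}^{r-1} g(-x+jm\mu)=\prod_{j=0}^{r-1} g(x-jm\mu)$. Next I would use periodicity to turn each negative shift back into a positive one: since $q\pi=rm\mu$ is a multiple of the period, $g(x-jm\mu)=g(x-jm\mu+q\pi)=g(x+(r-j)m\mu)$ for every $j$. Substituting gives $h(-x)=\prod_{j=0}^{r-1} g(x+(r-j)m\mu)$.

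Finally I would reindex by $j\mapsto r-j$, which carries $\{0,\dots,r-1\}$ onto $\{1,\dots,r\}$, so that $h(-x)=\prod_{k=1}^{r} g(x+km\mu)$. Here the boundary factor $k=r$ is $g(x+rm\mu)=g(x+q\pi)=g(x)$, which restores precisely the missing $k=0$ factor; hence $\prod_{k=1}^{r} g(x+km\mu)=\prod_{k=0}^{r-1} g(x+km\mu)=h(x)$, completing the proof.

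I expect the only genuine obstacle to be bookkeeping at this last step — making sure that after reflecting and re-shifting, the index range $\{0,\dots,r-1\}$ returns to an equivalent range rather than being displaced by one — and this is resolved cleanly by the relation $g(x+rm\mu)=g(x)$. An alternative route, if one prefers to lean on the Appendix, is to invoke Lemma~\ref{a3} with the coprime pair $(q,r)$ to rewrite $h(x)=\prod_{j=0}^{r-1} g(x+j\pi/r)$ as an evenly spaced product and then obtain evenness from the negation bijection $j\mapsto (-j)\bmod r$; but the direct argument above needs only the parity and periodicity of $g$ and avoids any coprimality hypothesis.
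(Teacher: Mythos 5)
Your proof is correct and follows essentially the same route as the paper's: both rest on the periodicity of $g$ together with the identity $mr\mu = q\pi$ (i.e., $nq=mr$), the reindexing $j\mapsto r-j$, and the evenness of $g$. The only difference is cosmetic — the paper reindexes $h(x)$ first and invokes evenness last, while you start from $h(-x)$ and apply evenness first — and your write-up is in fact more careful about the boundary factor than the paper's terse version.
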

\begin{proof}
Because of the periodicity of $g\left(x\right)$ and$\ nq=mr$, $g\left(x+mj\mu\right)= g(x+m(j-r)\mu)$. 
Let $j\to-j+r$. Then 
$h\left(x\right)=g\left(x\right)\dots g\left(x-\left(r-1\right)j\mu\right)$
and the proof follows since by definition$\ g\left(x\right)$ is even.
\end{proof}
\begin{corollary} 
\label{a6}
Consider $\lambda_l(x)$ and $\delta_l =(p-k-1)\mu $. Then $\lambda_l \left(x-\delta_l \right)=
h\left(x-\left(p-l-1\right)\mu \right)\dots h\left(x\right)$ and $\lambda_l \left(x-\delta_l \right)=\lambda_l \left(-x\right)$ since $h(x)$ is even by Lemma \ref{a5}.                                                                                     
\end{corollary}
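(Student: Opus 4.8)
The plan is to reduce both claimed equalities to a single explicit expansion of $\lambda_l$, followed by an index relabelling for the first identity and a termwise use of the evenness of $h$ for the second. First I would unfold the definition $\lambda_l(x)=\prod_{k=1}^{p-l}h(x+(k-1)\mu)$. Since the product index $k$ runs over $\{1,\dots,p-l\}$, the shift $(k-1)\mu$ sweeps through $0,\mu,\dots,(p-l-1)\mu$, so that
\begin{displaymath}
\lambda_l(x)=h(x)\,h(x+\mu)\cdots h\bigl(x+(p-l-1)\mu\bigr).
\end{displaymath}
This rewriting is the common starting point for both identities, with $\delta_l=(p-l-1)\mu$.

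For the first identity I would substitute $x\to x-\delta_l$, turning each factor into $h\bigl(x-(p-l-1)\mu+(k-1)\mu\bigr)$, whose argument simplifies to $x-\bigl((p-l)-k\bigr)\mu$. Reindexing by $s=(p-l)-k$, so that $s$ decreases from $p-l-1$ to $0$ as $k$ increases from $1$ to $p-l$, each factor becomes $h(x-s\mu)$ with $s\in\{0,\dots,p-l-1\}$. Ordering the product by decreasing shift then reproduces exactly $h\bigl(x-(p-l-1)\mu\bigr)\cdots h(x-\mu)\,h(x)$, which is the stated right-hand side.

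For the second identity I would evaluate the same expansion at $-x$, giving $\lambda_l(-x)=\prod_{s=0}^{p-l-1}h(-x+s\mu)$ after the substitution $s=k-1$. Invoking Lemma \ref{a5}, which states that $h$ is even, each factor satisfies $h(-x+s\mu)=h(x-s\mu)$. Hence $\lambda_l(-x)=\prod_{s=0}^{p-l-1}h(x-s\mu)$, which coincides factor-for-factor with the product found in the first part for $\lambda_l(x-\delta_l)$. Chaining the two computations yields $\lambda_l(x-\delta_l)=\lambda_l(-x)$.

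The argument is entirely mechanical; the only point requiring care is the index bookkeeping in the reflection $s=(p-l)-k$, which must correctly reverse the order of the $p-l$ factors so that the forward shifts appearing in $\lambda_l(x)$ become the backward shifts appearing in $\lambda_l(x-\delta_l)$. There is no substantive obstacle beyond verifying this relabelling and the termwise application of Lemma \ref{a5}.
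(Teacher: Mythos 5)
Your proof is correct and follows essentially the same route as the paper, which establishes the corollary inline by expanding $\lambda_l(x-\delta_l)$ into the product $h\left(x-\left(p-l-1\right)\mu\right)\dots h\left(x\right)$ and then applying the evenness of $h(x)$ from Lemma \ref{a5} factor by factor; your explicit reindexing $s=(p-l)-k$ just makes the bookkeeping the paper leaves implicit. You also correctly read $\delta_l=(p-l-1)\mu$, silently fixing the typo $(p-k-1)\mu$ in the paper's statement.
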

\begin{lemma}
\label{a7}
Let $i$ and $c$ be positive integers. Let $\{t_i\}$ be the set of non-negative integers such 
that $2^{t_{i}+1}>c\ge 2^{t_i}$. And let $\{u_i\}$ be the set of non-negative integer such that 
$u_i=c-2^{t_i}u_{i-1}$ and$\ u_0=1$. Then, a fix point exists if and only if$\ c=2^{l_{1\ }}$.
\end{lemma}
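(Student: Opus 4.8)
The plan is to read this lemma through the algorithm in the proof of Theorem~\ref{t_lower}, identifying $c=l+1$ as the target power of $h(x)$ and $u_i$ as the exponent of $h(x+\delta)$ at the start of the $i$-th batch of recursive steps. Within a batch the inequality doubles that exponent as long as it does not exceed the exponent of $h(x)$; writing $t_i$ for the number of doublings (the largest integer with $2^{t_i}u_{i-1}\le c$, i.e.\ $2^{t_i+1}u_{i-1}>c\ge 2^{t_i}u_{i-1}$ --- the factor $u_{i-1}$ being implicit in the stated inequality), the change of variable $x\to x-\delta$ then swaps the two exponents, so the new $h(x+\delta)$-exponent is exactly $u_i=c-2^{t_i}u_{i-1}$. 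The ``fixed point'' is the last element of the partially ordered set, in which $h(x+\delta)$ has disappeared; thus a fixed point exists precisely when some $u_i=0$, and the statement to prove reduces to: \emph{$u_i=0$ for a finite $i$ if and only if $c$ is a power of $2$}.

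First I would dispose of the easy direction. If $c=2^{s}$, then starting from $u_0=1$ one gets $t_1=s$ since $2^{s}\cdot 1=c$, and hence $u_1=c-2^{s}\cdot 1=0$. The fixed point is reached after a single batch, which recovers the remark in the theorem that the case $l+1=2^{t}$ terminates on the first iteration.

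For the converse I would argue by contrapositive and reduce modulo the odd part of $c$. Write $c=2^{s}m$ with $m$ odd, where $m>1$ exactly when $c$ is not a power of $2$. Reducing $u_i=c-2^{t_i}u_{i-1}$ modulo $m$ annihilates the $c$ term and leaves $u_i\equiv -2^{t_i}u_{i-1}\pmod{m}$. Iterating from $u_0=1$ gives $u_i\equiv(-1)^i2^{t_1+\cdots+t_i}\pmod{m}$. Since $m$ is odd, $2$ is a unit modulo $m$, so the right-hand side is a unit and in particular $u_i\not\equiv 0\pmod{m}$ for every $i$. As each $u_i$ is a non-negative integer, $u_i=0$ would force $u_i\equiv 0\pmod m$; hence no $u_i$ can vanish and no fixed point exists. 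I would supplement this by noting that the $u_i$ remain strictly positive and bounded, since $0<u_i<2^{t_i}u_{i-1}\le c$, so the sequence genuinely cycles forever rather than stalling.

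The main obstacle I anticipate is bookkeeping rather than conceptual: pinning down the precise definition of $t_i$ and verifying that each doubling is legitimate, i.e.\ that after a swap the $h(x)$-exponent strictly exceeds the $h(x+\delta)$-exponent (equivalently $u_i<c/2$, giving $t_{i+1}\ge 1$) so the recursion really advances. Once the recursion is set up correctly, the modular reduction is the whole point: collapsing $c\equiv 0\pmod{m}$ turns an opaque subtractive recursion into repeated multiplication by the unit $-2$, making the non-vanishing of $u_i$ --- and therefore the nonexistence of a fixed point when $c$ is not a power of $2$ --- immediate.
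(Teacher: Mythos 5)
Your proof is correct and takes essentially the same route as the paper's: both unroll the recursion $u_i=c-2^{t_i}u_{i-1}$ to conclude that $u_i$ is, up to multiples of $c$, the signed power of two $(-1)^i2^{t_1+\cdots+t_i}$, so a fixed point $u_i=0$ forces $c$ to be a power of $2$, while conversely $c=2^{t_1}$ gives $u_1=0$ after one batch. Your reduction modulo the odd part of $c$ is merely a cleaner packaging of the paper's prime-factorization step, and your reading of the garbled definition of $t_i$ (with the implicit factor $u_{i-1}$) matches the algorithm actually used in Theorem \ref{t_lower}.
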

\begin{proof}
After $i$ iterations,
\setlength{\arraycolsep}{0.0em}
\begin{eqnarray*}
u_i&{}={}&{c-2}^{t_i}u_{i-1} = c{-2}^{t_i}\left({c-2}^{t_{i-1}}u_{i-2}\right) \\ 
&{}={}&\left(c-2^{t_i}\left(\dots \left(c-2^1\right)\dots \right)\right) \\ 
&{}={}&c\left[1-2^{t_i}\left(\dots \left(1{-2}^{t_2}\right)\dots \right)\right]+{\left(-1\right)}^i2^{t_i+\dots +t_1}. 
\end{eqnarray*}
\setlength{\arraycolsep}{5pt}
\noindent Then $u_i=0$ implies that the prime factorization of $\left(k+1\right)$ must be $2^a$ where $a$ is a 
non-negative integer.  Because $2^{t_1+1}>\left(l+1\right)=2^a\ge 2^{t_1}$, then $a=t_1$ and $u_i=0$ for $i>0$.                       
\end{proof}
\begin{lemma}
\label{a8}
\begin{displaymath}
\prod^{r-1}_{j=0}{g\left(x+\frac{mj\pi }{n}\right)}=\prod^{r-1}_{j=0}{g\left(x+\frac{j\pi }{r}\right)}.
\end{displaymath}
\end{lemma}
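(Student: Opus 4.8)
The plan is to rewrite both products so that they share the common base angle $\pi/r$, and then to recognize that the two products differ only by a bijective reindexing of their factors, which leaves the product unchanged.

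First I would use the definitions $m=pq$ and $n=pr$ to simplify the step size on the left-hand side. Since $mj/n = qj/r$, the $j$-th factor becomes $g\left(x+\tfrac{qj\pi}{r}\right)$, so that both sides now involve only the base angle $\pi/r$. Next, invoking the $\pi$-periodicity of $g$ (the relation $g(x)=g(x+\pi)$), I would reduce the integer multiplier $qj$ modulo $r$: writing $qj = r\left\lfloor \tfrac{qj}{r}\right\rfloor + (qj\bmod r)$ shows that $\tfrac{qj\pi}{r}$ and $\tfrac{(qj\bmod r)\pi}{r}$ differ by an integer multiple of $\pi$, whence $g\left(x+\tfrac{qj\pi}{r}\right)=g\left(x+\tfrac{(qj\bmod r)\pi}{r}\right)$.

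The key structural observation is that $p=\gcd(m,n)$ together with $m=pq$ and $n=pr$ forces $\gcd(q,r)=1$: any common factor of $q$ and $r$ would enlarge $\gcd(m,n)$ beyond $p$. This is precisely the hypothesis of Lemma \ref{a3}, which guarantees that $\sigma(j)=qj\bmod r$ is a bijection of $\left\{0,\dots,r-1\right\}$ onto itself. Finally, since $\sigma$ merely permutes the index set, relabeling the factors of the product gives $\prod_{j=0}^{r-1} g\left(x+\tfrac{\sigma(j)\pi}{r}\right)=\prod_{j=0}^{r-1} g\left(x+\tfrac{j\pi}{r}\right)$, which is the desired right-hand side.

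I do not expect any genuine obstacle here; the result is a clean application of the coprime-bijection lemma already established. The only step that warrants a line of justification is the coprimality $\gcd(q,r)=1$, and this follows immediately from the maximality of $p$ as the greatest common divisor. Once that is in hand, Lemma \ref{a3} supplies the permutation and the remainder is routine bookkeeping with the periodicity of $g$.
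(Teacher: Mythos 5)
Your proposal is correct and follows essentially the same route as the paper's own proof: reduce $mj/n$ to $qj/r$, use the $\pi$-periodicity of $g$ to replace $qj$ by $qj\bmod r$, and invoke Lemma \ref{a3} to see that $\sigma(j)=qj\bmod r$ permutes the index set. Your explicit remark that $p=\gcd(m,n)$ forces $\gcd(q,r)=1$ is a small but welcome addition, since the paper relies on this hypothesis of Lemma \ref{a3} without stating it.
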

\begin{proof}
Recall that $r$, $q$ and $p$ are positive integers such 
that $m=pq$, $n=pr$, and $p=\gcd(m,n)$. Then 
\begin{displaymath}
g\left(x\ +\frac{mj\pi }{n}\right)=g\left(x+\frac{qj\pi }{r}\right).
\end{displaymath}
Use the periodicity of $g\left(x\right)$ and the definition of the modulo to further 
write the right-hand side of the equality as
\begin{displaymath}
g\left(x\ +\frac{qj\pi }{r}-\left\lfloor \frac{qj}{r}\right\rfloor \pi
\right)=g\left(x+\left(qj\bmod r\right)\frac{\pi }{r}\right).
\end{displaymath}
Lemma \ref{a3} implies that map $\sigma \left(j\right)=qj\bmod r$ is a bijection from the 
set of $j$ to itself.
\end{proof}


\bibliographystyle{IEEEtran}
\bibliography{mybib}

\end{document}